\documentclass[12pt,reqno]{amsart} 
\usepackage{amsmath, amssymb, mathscinet,mathtools}
\usepackage{amsthm} 
\usepackage{enumitem}
\usepackage[T1]{fontenc}
\numberwithin{equation}{section}

\usepackage{xcolor}
\usepackage[
colorlinks=true,
linkcolor=blue,
citecolor=blue,
urlcolor=blue
]{hyperref}
\usepackage{mathrsfs} 

\usepackage{graphicx}
\usepackage{color}
\usepackage{comment}

\usepackage{titlesec}
\usepackage[nameinlink,noabbrev]{cleveref}

\titleformat{\section}{\normalfont\bfseries\large}{\thesection.}{0.5em}{}
\titleformat{\subsection}{\normalfont\bfseries}{\thesubsection.}{0.5em}{}
\titleformat{\subsubsection}{\normalfont\bfseries}{\thesubsubsection.}{0.5em}{}

\theoremstyle{plain} 
\theoremstyle{plain}
\newtheorem{theorem}{Theorem}[section]
\newtheorem{lemma}[theorem]{Lemma}
\newtheorem{corollary}[theorem]{Corollary}
\newtheorem{proposition}[theorem]{Proposition}

\theoremstyle{definition}
\newtheorem{definition}[theorem]{Definition}

\newcommand{\cH}{\mathcal H}

\newcommand{\ku}{\kappa_{\mathrm u}}

\newcommand{\cU}{\mathcal U}
\newcommand{\one}{\mathbf 1}
\newcommand{\PhiG}{\Phi}
\newcommand{\core}{\operatorname{core}}

\title{Non Uniform Kazhdan Constant for Linear Groups}

\author{Alexander Lubotzky}
\address{Department of Mathematics,
	Faculty of Mathematics and Computer Science,
	Weizmann Institute of Science,
	Rehovot 7610001, Israel}
\email{alex.lubotzky@mail.huji.ac.il}

\author{Jvbin Yao}
\address{Research Center for Operator Algebras,
	School of Mathematical Sciences,
	East China Normal University,
	Shanghai 200241, PR China}
\email{52285500009@stu.ecnu.edu.cn}

\dedicatory{Dedicated to Shahar Mozes on the occasion of his 65th birthday.}

\begin{document}
	
	\begin{abstract}

	   A group $\Gamma$ generated by a finite set $S$ has Property $(T)$ if the associated Kazhdan constant $\kappa(\Gamma,S)$ is positive. If so, the same holds for every finite generating set $S$. There are Property $(T)$ groups which are uniformly $(T)$, and others which are not.
	
		It is a well-known problem whether the classical examples of Property $(T)$ groups, $\mathrm{SL}_n(\mathbb Z)$, $n\geq 3$, are uniformly $(T)$ or not. We show that they are not. Moreover, the same holds for every infinite finitely generated linear group.

	\end{abstract}
	
	\maketitle

	\section{Introduction}
	
	Let $\Gamma$ be a finitely generated group and let $S\subseteq\Gamma$ be a
	finite set.  The Kazhdan constant of $\Gamma$ relative to $S$ is
	\[
	\kappa(\Gamma,S)
	=\inf_{\substack{\pi:\Gamma\to\mathcal U(\mathcal H)\\
			\mathcal H^\Gamma=\{0\}}}
	\ \inf_{\substack{\xi\in\mathcal H\\ \|\xi\|=1}}
	\ \max_{s\in S}\|\pi(s)\xi-\xi\|.
	\]
	If $\Gamma$ is finitely generated, its uniform Kazhdan constant is
	\[
	\kappa_{\mathrm u}(\Gamma)
	=\inf_{\substack{S\subseteq\Gamma\text{ finite}\\
			\langle S\rangle=\Gamma}}
	\kappa(\Gamma,S).
	\]
	A finitely generated group $\Gamma$ is said to have
	\emph{Property~$(T)$ uniformly} if $\kappa_{\mathrm u}(\Gamma)>0.$
	Notice that the infimum is taken over all finite generating sets, with no
	bound on their cardinalities.  Thus uniform Property~$(T)$ is considerably
	stronger than Property~$(T)$ itself, which only requires a positive Kazhdan
	constant for one finite generating set.
	
	The question of whether Property~$(T)$ can hold uniformly over all finite
	generating sets was raised by Lubotzky, in connection with the dependence of
	expansion and Kazhdan constants on generating sets; see
	\cite{LubotzkyWeiss,Lubotzky} and \cite[Section~8]{Shalom}.
	Gelander and \.{Z}uk showed that Property~$(T)$ does not imply uniform
	Property~$(T)$: they exhibited groups with Property~$(T)$ whose uniform
	Kazhdan constant is zero, and in particular showed that this phenomenon occurs
	for many lattices in Lie groups \cite{GelanderZuk}.  Osin subsequently proved
	that every infinite hyperbolic group has zero uniform Kazhdan constant
	\cite{Osin}.  In the opposite direction, Osin and Sonkin constructed infinite
	finitely generated groups with positive uniform Kazhdan constant
	\cite{OsinSonkin}.
	
	Although explicit Kazhdan constants are known for several natural generating
	sets of arithmetic groups \cite{Shalom,Kassabov}, these estimates do not
	address uniformity over all finite generating sets.

	A central concrete problem concerns $\operatorname{SL}_n(\mathbb Z)$. For $n\geq3$, these groups are among the most classical examples of groups with Property~$(T)$, but it remained open whether they have Property~$(T)$ uniformly.
	The question was posed explicitly in
	\cite[Problem~1.3]{Riley} and \cite[Question~3.1]{Arzhantseva};
	the case of $\operatorname{SL}_3(\mathbb Z)$ is also recorded in
	\cite[Question~7.11]{BHV08} and later discussed in
	\cite[Section~1.1]{Pham}.
	
   In this paper we settle this question in a more general form.
	
	\begin{theorem}\label{thm:main}
		Let $F$ be a field and let
		$\Gamma\leq\operatorname{GL}_n(F)$ be finitely generated and infinite.
		Then
		\[
		\kappa_{\mathrm u}(\Gamma)=0.
		\]
	\end{theorem}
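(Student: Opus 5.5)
The plan is to find a unitary representation of $\Gamma$ without invariant vectors, together with generating sets $S_k$ of $\Gamma$ all of whose elements lie in smaller and smaller neighbourhoods of the identity of a compact connected group through which the representation factors. A bump function concentrated near the identity will then be almost invariant under every element of $S_k$, forcing $\kappa(\Gamma,S_k)\to 0$.

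First I dispose of the amenable case. By the Tits alternative, either $\Gamma$ is virtually solvable or it contains a non-abelian free group. In the virtually solvable case $\Gamma$ is infinite and amenable. Hence the left regular representation on $\ell^2(\Gamma)$ has no invariant vectors but has almost invariant vectors for every finite $S$, so $\kappa(\Gamma,S)=0$ for every $S$ and in particular $\ku(\Gamma)=0$. From now on I assume $\Gamma$ is not virtually solvable. I also assume $\Gamma$ has Property $(T)$, since otherwise $\kappa(\Gamma,S)=0$ for every finite generating set $S$ and there is nothing to prove.

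The core lemma I aim to prove is the following. Let $K$ be a compact connected Lie group, $K\neq 1$, and let $\rho:\Gamma\to K$ have dense image. Then $\ku(\Gamma)=0$. The proof has two steps.

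\emph{Step (a): small generating sets.} For every neighbourhood $V$ of $1$, the set $\Gamma\cap\rho^{-1}(V)$ generates $\Gamma$. To see this, fix $\gamma\in\Gamma$. Since $K$ is connected, $\rho(\gamma)=v_1\cdots v_m$ with $v_i\in W$, where $W$ is a smaller neighbourhood of $1$. By density, choose $\gamma_i\in\Gamma$ with $\rho(\gamma_i)$ close to $v_i$ for $i<m$. Then the last factor $(\gamma_1\cdots\gamma_{m-1})^{-1}\gamma$ also has image close to $v_m$, hence in $V$. Applying this to the finitely many generators of $\Gamma$ gives finite generating sets $S_k\subseteq\rho^{-1}(V_k)$, where $V_k$ shrinks to $1$.

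\emph{Step (b): the almost invariant vector.} Take $\pi$ to be the left regular representation of $K$ on $L^2_0(K)=L^2(K)\ominus\mathbb C$, pulled back to $\Gamma$ via $\rho$. It has no $\Gamma$-invariant vectors, because $\rho(\Gamma)$ is dense and $K$ acts on $L^2_0(K)$ without invariant vectors. Choose a fixed normalized function $\xi\in L^2_0(K)$, for example a bump function minus its mean. By continuity of translation in $L^2$, we have $\max_{s\in S_k}\|\pi(s)\xi-\xi\|\to 0$. Hence $\kappa(\Gamma,S_k)\to 0$.

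It remains to produce such a $\rho$, up to finite index. This is the step I expect to be the main obstacle. In characteristic $0$, $\Gamma\leq \operatorname{GL}_n(k)$ for a finitely generated field $k$. I would choose an embedding $k\hookrightarrow\mathbb C$, possibly a non-standard one, after which some infinite quotient of $\Gamma$ lands in a compact group $U(n)$. This is a Gelander--\.Zuk-type argument: use a Galois conjugate in which $\Gamma$ is bounded, or pass to the compact form of the Zariski closure. Then $K=\overline{\rho(\Gamma)}^{\,0}$ has positive dimension, and I get a finite-index subgroup $\Gamma_0$ with dense image in $K$.

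I then have to transfer the conclusion from $\Gamma_0$ to $\Gamma$. One route is to induce the representation of step (b) to $\Gamma$, and take the generating sets to be small elements of $\Gamma_0$ together with a fixed finite set $R$ of coset representatives. The trouble is that the elements of $R$ need not move $\xi$ only slightly. I would try to repair this by choosing $\xi$ invariant under the finite group $\overline{\rho(\Gamma)}/K$, or by replacing each $r\in R$ with $r u$ for a suitable small $u\in\Gamma_0$ chosen by density.

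In positive characteristic, compact images are profinite and totally disconnected, so step (a) fails as stated. There I would instead use finite congruence quotients $\Gamma\to Q_k\leq \operatorname{GL}_n(\mathbb F_q[t]/(t^k))$, and replace connectedness by the fact that the kernel of $Q_k\to Q_j$ is a $p$-group generated by elements close to the identity. I expect this transfer, and the positive-characteristic case, to need the most care.
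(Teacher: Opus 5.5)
Your steps (a) and (b) are correct; this is the Gelander--\.{Z}uk mechanism. Your amenable case agrees with the paper. The gap is the existence of $\rho$. It is false that a non-virtually-solvable $\Gamma$, even after passing to finite index, has a dense homomorphism into a nontrivial compact connected Lie group, and it fails for the motivating example. For $\Gamma=\operatorname{SL}_n(\mathbb Z)$, $n\ge 3$, the field is $\mathbb Q$. Its only embedding in $\mathbb C$ makes $\Gamma$ discrete and unbounded, so no Galois conjugate makes it bounded. Worse, by Bass--Milnor--Serre/Margulis superrigidity every homomorphism $\operatorname{SL}_n(\mathbb Z)\to\mathrm U(N)$ has finite image. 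It virtually agrees with a rational representation of $\operatorname{SL}_n$, which sends elementary unipotents to unipotents. These are trivial in $\mathrm U(N)$, and their normal closure has finite index. The same holds for every finite-index subgroup. So ``passing to the compact form of the Zariski closure'' yields no homomorphism from $\Gamma$ at all, and your transfer problem from $\Gamma_0$ to $\Gamma$ never arises. This is exactly why the Gelander--\.{Z}uk method left $\operatorname{SL}_n(\mathbb Z)$ open.

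Your positive-characteristic fallback does not fill the hole, and an argument of that finite-quotient kind is what characteristic $0$ needs too. With a profinite target, $\rho^{-1}(U)$ is a proper finite-index subgroup for every proper open $U$. So no generating set consists of small elements, and nothing in your sketch controls the displacement of generators that map far from the identity in $Q_k$.

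The missing idea is to replace \emph{smallness} by \emph{independence across many factors}. The paper uses the Lubotzky alternative (strong approximation) to get, for every $m$, a surjection of a finite-index normal subgroup $\Gamma_0$ onto $S_1\times\cdots\times S_m$, with the $S_i$ pairwise nonisomorphic nonabelian simple groups. Normal cores then produce finite quotients $E_m$ of $\Gamma$ containing normal subgroups $M_1\times\cdots\times M_m$ with $M_i\cong S_i^{a_i}$ and quotient $\Gamma/\Gamma_0$.

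Each $M_i$ has a proper supplement $\widehat H_i\supseteq\prod_{j\neq i}M_j$. This holds because $M_i\nleq\PhiG\bigl(E_m/\prod_{j\neq i}M_j\bigr)$, Frattini subgroups being nilpotent. Generators of $\Gamma/\Gamma_0$ lift into $\bigcap_i\widehat H_i$ via the fiber-product decomposition of $E_m$, and with generators of the $M_i$ they generate $E_m$. The space $\bigoplus_i\ell^2_0(E_m/\widehat H_i)$ has no invariant vectors. On it the vector $m^{-1/2}(\eta_1,\ldots,\eta_m)$ is fixed by the lifts and moved by at most $2/\sqrt m$ by each generator of $M_i$, since that generator only moves the $i$th coordinate. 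Quotient monotonicity then gives $\ku(\Gamma)\le 2/\sqrt m$ for all $m$.
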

	
	It is interesting to contrast \Cref{thm:main} with the main result of
	\cite{EMO}, which says that linear groups do have uniform exponential growth.
	
	As an immediate consequence of \Cref{thm:main}, no infinite finitely generated
	linear group has Property~$(T)$ uniformly. In particular,
	\[
	\kappa_{\mathrm u}\bigl(\operatorname{SL}_n(\mathbb Z)\bigr)=0
	\qquad (n\geq3).
	\]

	The initial argument of the proof is that if $\Gamma$ is mapped onto a product
	of $m$ different non-trivial finite groups, then $\Gamma$ has a finite generating set $S_m$ with $\kappa(\Gamma,S_m)<2/\sqrt{m}$.
	If this happens for every $m$, then $\kappa_u(\Gamma)=0$. This already
	suffices to settle the case of $\mathrm{SL}_n(\mathbb Z)$ as 
	$
	\mathrm{SL}_n(\mathbb Z)
    \twoheadrightarrow
	\prod_{i=1}^{m}\mathrm{SL}_n(\mathbb F_{p_i}),
	$
	where $\{p_1,\ldots,p_m\}$ are the first $m$ primes. The Lubotzky alternative for linear groups \cite[Window~9, Theorems~12 and~21]{LubotzkySegal03}
	implies that every non-virtually-solvable finitely generated linear group
	$\Gamma$ has a finite-index subgroup $\Lambda$ satisfying the above
	condition, and so
	$
	\kappa_u(\Lambda)=0.
	$
	We do not know if, in general,
	$
	\kappa_u(\Lambda)=0
	$
	for a finite-index subgroup $\Lambda$ of $\Gamma$ implies
	$
	\kappa_u(\Gamma)=0.
	$
	Still, we can show this for the current case (see  \Cref{lem:finite-extension}).
	
	Our proof (in contrast with non-uniform proofs of \cite{GelanderZuk} and \cite{Osin}) uses only
	finite representations; thus we are actually proving a stronger result. For this we need to recall the following definition.

   \begin{definition}
   	For a finite generating set $S$ of $\Gamma$, let $\tau(\Gamma,S)$ denote
   	the Kazhdan constant obtained by restricting to unitary representations
   	with finite image, and set
   	\[
   	\tau_{\mathrm u}(\Gamma)
   	=
   	\inf_{\substack{S\subseteq\Gamma\text{ finite}\\
   			\langle S\rangle=\Gamma}}
   	\tau(\Gamma,S).
   	\]
    \end{definition}
   	 
   	Since finite-image representations form a subclass of all unitary
   	representations, $\kappa_{\mathrm u}(\Gamma)\leq\tau_{\mathrm u}(\Gamma)$.
   	In fact, the proof of 	\Cref{thm:main} gives the stronger conclusion
   	\[
   	\tau_{\mathrm u}(\Gamma)=0.
   	\]
   In the non-virtually-solvable case, all the representations used in the
   proof factor through finite quotients.  In the virtually solvable case,
   the conclusion follows from amenability and residual finiteness, via
   the usual Følner-set argument in finite quotients (see~ \cite{LubotzkyWeiss}).

   There are groups without Property~$(T)$ and without nontrivial finite quotients, such groups have Property~$(\tau)$ (i.e. $\tau(\Gamma,S)>0$) but not Property~$(T)$. More interesting, and not
   trivial, is that there are finitely generated residually finite groups with Property~$(\tau)$ but not Property~$(T)$, e.g. $\operatorname{SL}_2(\mathbb Z[1/p])$ (see \cite[Example~4.3.3(E), p.~52]{Lubotzky}).
   But we don't know if there exist finitely generated residually finite groups
   with $\tau_u(\Gamma)>0$ while $\kappa_u(\Gamma)=0$.
   
   We finally mention that since $\kappa_u(\Gamma)\leq\kappa_u(\Delta)$
   (and $\tau_u(\Gamma)\leq\tau_u(\Delta)$) for every quotient $\Delta$ of $\Gamma$,
   the main results of this paper hold whenever $\Gamma$ has a finite-dimensional
   linear representation with infinite image. This applies, for example, to
   $\operatorname{Aut}(F_n)$, $n\geq4$, which has quite recently been proved to
   have $(T)$ \cite{Nitsche,KNO,KKN}, and so
   $\tau_u(\operatorname{Aut}(F_n))
   =\kappa_u(\operatorname{Aut}(F_n))=0$,
   in spite of $\operatorname{Aut}(F_n)$ being non-linear for $n\geq3$.

	The paper is organized as follows.  In Section~2 we establish the two
	Kazhdan-constant lemmas needed below.  Section~3 recalls the form of the
	Lubotzky alternative used in the proof.  In Section~4 we construct the finite
	quotients associated with the normal core, and Section~5 proves
	\Cref{thm:main}.

	
	\section{Quotients and finite extensions}
	
	We first record quotient monotonicity. 
	
	\begin{lemma}[Quotient monotonicity]\label{lem:quotient}
		Let $q:\Gamma\twoheadrightarrow\Lambda$ be an epimorphism of finitely
		generated groups.  Then
		\[
		\ku(\Gamma)\leq \ku(\Lambda).
		\]
	\end{lemma}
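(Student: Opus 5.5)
The plan is to show that for every finite generating set $T$ of $\Lambda$ there is a finite generating set $S$ of $\Gamma$ with $\kappa(\Gamma,S)\leq\kappa(\Lambda,T)$. Taking the infimum over $T$ then gives $\ku(\Gamma)\leq\ku(\Lambda)$.

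\emph{Choice of $S$.} Fix a finite generating set $T$ of $\Lambda$ and a finite generating set $S_0$ of $\Gamma$. For each $t\in T$ choose a lift $\tilde t\in\Gamma$ with $q(\tilde t)=t$. Put
\[
S=\{\tilde t: t\in T\}\cup\bigl(S_0\cap\ker q\bigr)\cup\{\,s\,\widetilde{q(s)}^{-1}\text{-type corrections}\,\},
\]
concretely as follows. For each $s\in S_0$, write $q(s)$ as a word $w_s$ in $T$ and let $k_s=s\cdot w_s(\tilde T)^{-1}\in\ker q$. Then set $S=\tilde T\cup\{k_s: s\in S_0\}$. This set is finite. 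It generates $\Gamma$, because each $s=k_s\,w_s(\tilde T)$ lies in $\langle S\rangle$. Moreover $q(S)=T\cup\{1\}$.

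\emph{Comparison of constants.} Let $\pi$ be a unitary representation of $\Lambda$ with $\cH^\Lambda=\{0\}$ and let $\xi$ be a unit vector. Set $\rho=\pi\circ q$, a unitary representation of $\Gamma$. Since $q$ is onto, $\cH^{\Gamma}=\cH^\Lambda=\{0\}$, so $\rho$ is admissible in the infimum defining $\kappa(\Gamma,S)$. For $s=\tilde t$ we have $\|\rho(s)\xi-\xi\|=\|\pi(t)\xi-\xi\|$. For $s=k_s$ the displacement is $0$. Hence
\[
\max_{s\in S}\|\rho(s)\xi-\xi\|=\max_{t\in T}\|\pi(t)\xi-\xi\|.
\]
Taking infima over $\pi$ and $\xi$ gives $\kappa(\Gamma,S)\leq\kappa(\Lambda,T)$.

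The only potential obstacle is making sure $S$ is finite and generates $\Gamma$ without adding elements that move vectors. Using kernel elements, which act trivially under $\rho$, resolves this. The argument uses only representations pulled back from $\Lambda$, and a finite-image $\pi$ pulls back to a finite-image $\rho$. So the same proof gives $\tau_{\mathrm u}(\Gamma)\leq\tau_{\mathrm u}(\Lambda)$.
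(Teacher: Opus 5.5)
Your proof is correct and is essentially the paper's argument: lift $T$ to $\widetilde T$, add the finitely many kernel corrections $k_s=s\,w_s(\widetilde T)^{-1}$ (the paper's $r_j=x_ju_j^{-1}$) so that the set generates $\Gamma$, and note that pulled-back representations have no invariant vectors while the corrections act trivially. You should delete the first, half-formed display of $S$ (with the placeholder ``corrections'') and keep only the concrete definition $S=\widetilde T\cup\{k_s: s\in S_0\}$.
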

	
	\begin{proof}
		Fix a finite generating set $X=\{x_1,\ldots,x_d\}$ of $\Gamma$, and let
		$T=\{t_1,\ldots,t_r\}$ be an arbitrary finite generating set of $\Lambda$.
		Choose lifts $\widetilde t_i\in\Gamma$ of the $t_i$.  For each $j$, choose a
		word $w_j(T)$ representing $q(x_j)$ and put
		\[
		u_j=w_j(\widetilde t_1,\ldots,\widetilde t_r),
		\qquad
		r_j=x_ju_j^{-1}\in\ker q.
		\]
		Then
		\[
		S_T=\{\widetilde t_1,\ldots,\widetilde t_r,r_1,\ldots,r_d\}
		\]
		generates $\Gamma$, because $x_j=r_ju_j$.
		
		Let $\sigma:\Lambda\to\cU(\cH)$ be a unitary representation with
		$\cH^\Lambda=\{0\}$.  The pullback $\sigma\circ q$ has no nonzero
		$\Gamma$-invariant vector, each $r_j$ acts trivially, and the displacement by
		$\widetilde t_i$ equals the displacement by $t_i$.  Hence
		\[
		\kappa(\Gamma,S_T)\leq\kappa(\Lambda,T).
		\]
		Taking the infimum over all finite generating sets $T$ of $\Lambda$ proves the
		claim.
	\end{proof}

	\begin{lemma}[Factorwise finite extensions]\label{lem:finite-extension}
		Let
		\[
		1\longrightarrow M=M_1\times\cdots\times M_m
		\longrightarrow E\overset{q}{\longrightarrow}F
		\longrightarrow1
		\]
		be an exact sequence of finite groups.  Assume that $M_i\triangleleft E$ for
		every $i$.  Put
		\[
		N_i=\prod_{j\neq i}M_j,
		\qquad
		E_i=E/N_i,
		\]
		and regard $M_i$ as the normal subgroup $M_iN_i/N_i$ of $E_i$. Suppose that
		\begin{equation}\label{eq:frattini-condition}
			M_i\nleq\PhiG(E_i)
			\qquad (1\leq i\leq m),
		\end{equation}
		where $\PhiG(E_i)$ denotes the Frattini subgroup. Then
		\[
		\ku(E)\leq\frac{2}{\sqrt m}.
		\]
		Equivalently, for every $i$ it is enough to assume that there is a proper
		subgroup $H_i<E_i$ satisfying $E_i=M_iH_i$.
	\end{lemma}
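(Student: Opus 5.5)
The plan is to build an explicit finite generating set $S$ of $E$, a unitary representation $\pi$ of $E$ without nonzero invariant vectors, and a unit vector $\xi$ such that every $s\in S$ either fixes $\xi$ or moves it only inside one of $m$ orthogonal summands of norm $1/\sqrt m$. That forces $\|\pi(s)\xi-\xi\|\le 2/\sqrt m$ for all $s\in S$, hence $\kappa(E,S)\le 2/\sqrt m$. Since the argument only ever uses subgroups $H_i<E_i$ with $E_i=M_iH_i$, I would work with that form of the hypothesis throughout. This form follows from \eqref{eq:frattini-condition}: if $M_i\not\le\PhiG(E_i)$, pick a maximal subgroup $H_i$ not containing $M_i$; then $M_iH_i=E_i$ by maximality and normality of $M_i$.

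\textbf{The representation and the vector.} Let $L_i\le E$ be the full preimage of $H_i$ under $E\to E_i=E/N_i$. Then $L_i$ is proper, $L_i\supseteq N_i$, and $E=M_iL_i$. Put $\pi=\bigoplus_{i=1}^m \pi_i$, where $\pi_i$ is the quasi-regular representation of $E$ on $\ell^2_0(E/L_i)$, the functions with zero sum. Each $\pi_i$ has no nonzero invariant vectors, since $E$ acts transitively on $E/L_i$. Let $\xi_i$ be the normalized projection of $\delta_{L_i}$ onto $\ell^2_0(E/L_i)$. It is nonzero because $|E/L_i|\ge2$, and it is $L_i$-invariant. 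Set $\xi=\frac1{\sqrt m}(\xi_1,\dots,\xi_m)$.

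\textbf{Key observation.} For $j\ne i$ we have $M_j\le N_i\le L_i$, and $N_i$ is normal in $E$. Hence $N_i$ lies in every conjugate of $L_i$ and acts trivially on $E/L_i$. So each element of $M_j$ acts trivially on every summand except the $j$-th, and moves $\xi$ by at most $2/\sqrt m$.

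\textbf{Generation (main step).} Put $L=\bigcap_i L_i$; its elements fix $\xi$ outright. Take $S=\bigcup_j M_j\cup L$, which is finite, and show that $E=ML$. Given $g\in E$, write $g=a_ih_i$ with $a_i\in M_i$ and $h_i\in L_i$ for each $i$, using $E=M_iL_i$. Set $h=(a_1\cdots a_m)^{-1}g$. In $E_i$ the factors $a_j$ with $j\ne i$ lie in $N_i$ and so vanish. Therefore the image of $h$ in $E_i$ equals the image of $a_i^{-1}g=h_i$, which lies in $H_i$. Hence $h\in L_i$ for every $i$, so $h\in L$ and $g\in ML$. Thus $S$ generates $E$.

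Combining the three steps, $\max_{s\in S}\|\pi(s)\xi-\xi\|\le 2/\sqrt m$, so $\ku(E)\le\kappa(E,S)\le 2/\sqrt m$. The only delicate point is the generation step, namely arranging simultaneously-compatible lifts in $\bigcap_i L_i$. The factorwise normality of the $M_i$ is exactly what makes it work.
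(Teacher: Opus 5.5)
Your proof is correct and follows the paper's argument. It uses the same representation $\bigoplus_i \ell^2_0(E/L_i)$, with $L_i$ the preimage of a proper supplement $H_i$, the same vector $\frac{1}{\sqrt m}(\xi_1,\dots,\xi_m)$, and a generating set built from elements of the $M_i$ together with elements of $\bigcap_i L_i$. The only difference is the generation step: you verify $E=M\cdot\bigcap_i L_i$ directly via the correction $h=(a_1\cdots a_m)^{-1}g$, whereas the paper gets the simultaneous lifts $s_r\in\bigcap_i\widehat H_i$ from the fiber-product isomorphism $E\cong E_1\times_F\cdots\times_F E_m$ (injectivity plus a cardinality count), so your route is slightly more direct.
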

	
	\begin{proof}
		Under the above identification,
		\[
		1\longrightarrow M_i\longrightarrow E_i\longrightarrow F\longrightarrow1
		\]
		is exact; the two hypotheses are equivalent.  Indeed, if $M_i\nleq\PhiG(E_i)$, choose a maximal subgroup $H_i<E_i$ which does not contain $M_i$.  Then $M_iH_i=E_i$.  Conversely, a proper supplement is contained in a maximal subgroup which cannot contain	$M_i$.
		
		Choose such a proper supplement $H_i<E_i$, and let
		$\widehat H_i$ be its full inverse image in $E$.  Thus
		\[
		N_i\leq\widehat H_i,
		\qquad
		E=M_i\widehat H_i.
		\]
		Let
		\[
		X_i=E/\widehat H_i\cong E_i/H_i,
		\qquad
		\cH_i=\ell^2_0(X_i).
		\]
		Since $H_i$ is proper, $|X_i|\geq2$.  Since $E=M_i\widehat H_i$, the group
		$M_i$ acts transitively on $X_i$.  Consequently
		\[
		\cH_i^{M_i}=\{0\}.
		\]
		Let $x_i=\widehat H_i\in X_i$ and define the unit vector
		\[
		\eta_i=
		\frac{\delta_{x_i}-|X_i|^{-1}\one_{X_i}}
		{\sqrt{1-|X_i|^{-1}}}
		\in\cH_i.
		\]
		It is fixed by $\widehat H_i$.
		
		We next construct a generating set of $E$ adapted simultaneously to the
		$\widehat H_i$. For each $i$, let $q_i:E_i\to F$ be the map induced by $q$,
		and write
		\[
		E_1\times_F\cdots\times_F E_m
		=
		\left\{
		(e_1,\ldots,e_m)\in E_1\times\cdots\times E_m:
		q_1(e_1)=\cdots=q_m(e_m)
		\right\}
		\]
		for the fiber product over $F$. Consider the natural map
		\[
		E\longrightarrow E_1\times_F\cdots\times_F E_m,
		\qquad
		e\longmapsto(eN_1,\ldots,eN_m).
		\]
		It is injective because $\bigcap_iN_i=1$. Moreover, each fiber of
		$q_i:E_i\to F$ has cardinality $|M_i|$, so
		\[
		\left|E_1\times_F\cdots\times_F E_m\right|
		=|F|\prod_{i=1}^m|M_i|=|E|.
		\]
		Hence the natural map is an isomorphism.
		
		Choose a generating set $R$ of $F$.  Since $H_i\to F$ is surjective, for every
		$r\in R$ choose $h_{i,r}\in H_i$ mapping to $r$.  By the fiber-product
		isomorphism, the tuple $(h_{1,r},\ldots,h_{m,r})$ has a lift $s_r\in E$.
		Then
		\[
		s_r\in\bigcap_{i=1}^m\widehat H_i,
		\]
		so every $s_r$ fixes every $\eta_i$.
		
		For each $i$, choose a finite generating set $T_i$ of $M_i$, and set
		\[
		S=\{s_r:r\in R\}\cup\bigcup_{i=1}^mT_i.
		\]
		The set $S$ generates $E$: its image generates $F$, and it contains generators
		of the kernel $M$.  Let
		\[
		\cH=\bigoplus_{i=1}^m\cH_i,
		\qquad
		\eta=\frac1{\sqrt m}(\eta_1,\ldots,\eta_m).
		\]
		The direct-sum permutation representation of $E$ on $\cH$ has no nonzero
		invariant vector, because $\cH_i^{M_i}=\{0\}$ for each $i$.
		
		Every $s_r$ fixes $\eta$.  If $t\in T_i$, then $t\in N_j\leq\widehat H_j$ for
		$j\neq i$, so $t$ fixes every component of $\eta$ except possibly the $i$th.
		Therefore
		\[
		\|t\eta-\eta\|
		=\frac1{\sqrt m}\|t\eta_i-\eta_i\|
		\leq\frac2{\sqrt m}.
		\]
		Thus $\kappa(E,S)\leq2/\sqrt m$, and hence
		$\ku(E)\leq2/\sqrt m$.
	\end{proof}
	
	We will use the following immediate special case.

	\begin{corollary}\label{cor:semisimple-kernel}
		In the setting of \Cref{lem:finite-extension}, suppose that assumption
		\eqref{eq:frattini-condition} is replaced by the assumption that every $M_i$
		is a nontrivial direct product of nonabelian finite simple groups. Then
		\[
		\ku(E)\leq\frac2{\sqrt m}.
		\]
	\end{corollary}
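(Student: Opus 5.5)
The plan is to reduce the corollary to \Cref{lem:finite-extension}. It suffices to verify condition \eqref{eq:frattini-condition}, namely $M_i\nleq\PhiG(E_i)$ for each $i$, when $M_i$ is a nontrivial direct product of nonabelian finite simple groups. The conclusion $\ku(E)\leq 2/\sqrt m$ then follows at once from the lemma.

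The argument uses a classical fact: the Frattini subgroup of a finite group is nilpotent. This is Frattini's theorem, proved via the Frattini argument with Sylow subgroups. Fix $i$ and suppose, for contradiction, that $M_i\leq\PhiG(E_i)$, where $M_i$ is viewed inside $E_i$ as $M_iN_i/N_i\cong M_i$. Subgroups of nilpotent groups are nilpotent, so $M_i$ would be nilpotent, hence solvable. But $M_i\cong S_1\times\cdots\times S_k$ with $k\geq1$ and each $S_j$ nonabelian simple. Then $M_i$ is not solvable, because $S_1$ is a nonabelian simple subgroup and so is perfect and nontrivial. This contradiction gives $M_i\nleq\PhiG(E_i)$ for every $i$, and \Cref{lem:finite-extension} applies.

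As a self-contained alternative, which avoids citing nilpotence of $\PhiG$, one can use the equivalent supplement formulation from the end of \Cref{lem:finite-extension}. By the Frattini argument, take a Sylow $p$-subgroup $P$ of $M_i$ for a prime $p$ dividing $|M_i|$. Then $E_i=M_i N_{E_i}(P)$. The normalizer $N_{E_i}(P)$ is proper, since otherwise $P\triangleleft M_i$, which is impossible: a nontrivial normal $p$-subgroup of $S_1\times\cdots\times S_k$ would project to a nontrivial normal $p$-subgroup of some $S_j$, forcing $S_j$ to be a $p$-group. So $H_i=N_{E_i}(P)$ is a proper supplement.

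No real obstacle is expected. The only point requiring care is the identification of $M_i$ with its image $M_iN_i/N_i$ in $E_i$. This image is isomorphic to $M_i$ because $M_i\cap N_i=1$ in the direct product, so the properties ``nonabelian simple factors'' and ``not nilpotent'' transfer to the image.
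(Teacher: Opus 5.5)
Your proposal is correct and follows the paper's proof. Like you, the paper verifies $M_i\nleq\PhiG(E_i)$ by showing that the Frattini subgroup of a finite group is nilpotent (via the Frattini argument applied to a Sylow subgroup of $\PhiG(H)$), whereas a nontrivial product of nonabelian simple groups is not nilpotent. Your alternative, taking $H_i=N_{E_i}(P)$ for a Sylow subgroup $P$ of $M_i$ as an explicit proper supplement, is also valid: it applies the same Frattini-argument idea directly to $M_i$ instead of to $\PhiG(E_i)$.
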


	\begin{proof}
		For completeness, recall that the Frattini subgroup of a finite group is
		nilpotent.  Indeed, let $P$ be a Sylow subgroup of $\PhiG(H)$.  The Frattini
		argument gives
		\[
		H=\PhiG(H)N_H(P).
		\]
		Since $\PhiG(H)$ consists of nongenerators, this implies $H=N_H(P)$.
		Hence every Sylow subgroup of $\PhiG(H)$ is normal, so $\PhiG(H)$ is
		nilpotent.
		
		If $M_i\leq\PhiG(E_i)$, then $M_i$ would be a subgroup of a finite nilpotent
		group and hence nilpotent.  This is impossible for a nontrivial direct product
		of nonabelian finite simple groups.  Thus the hypotheses of
		\Cref{lem:finite-extension} hold.
	\end{proof}
	
	\section{The arithmetic input}

	We isolate the precise arithmetic input needed below.

	\begin{proposition}
		\label{prop:lubotzky-product}
		Let $\Gamma$ be a finitely generated non-virtually-solvable linear group over
		an arbitrary field. Then there exist a finite-index subgroup
		$\Lambda\leq\Gamma$, pairwise nonisomorphic nonabelian finite simple groups
		\[
		S_1,S_2,\ldots,
		\]
		and a homomorphism
		\[
		\rho:\Lambda\longrightarrow\prod_{i=1}^{\infty}S_i
		\]
		with dense image. Equivalently, for every finite set $I\subset\mathbb N$, the
		coordinate map
		\[
		\rho_I:\Lambda\twoheadrightarrow\prod_{i\in I}S_i
		\]
		is surjective.
	\end{proposition}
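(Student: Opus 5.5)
The plan is to deduce the proposition from the Lubotzky alternative, \cite[Window~9, Theorems~12 and~21]{LubotzkySegal03}, together with a Goursat-type argument that turns many individual surjections into a surjection onto a product. The alternative says the following. Let $\Gamma$ be finitely generated, linear over some field, and not virtually solvable. Then there exist a finite-index subgroup $\Lambda\le\Gamma$, a connected simply connected simple algebraic group $G$ defined over a global field $k$, and an infinite set $\mathcal P$ of primes of $k$ such that, for every $\mathfrak p\in\mathcal P$, the reduction map
\[
\Lambda\twoheadrightarrow G(\mathbb F_{\mathfrak p})
\]
is surjective. Moreover $\Lambda$ may be chosen so that the diagonal map $\Lambda\to\prod_{\mathfrak p\in\mathcal P}G(\mathbb F_{\mathfrak p})$ is surjective on every finite subproduct; this is the strong approximation part of Theorem~21. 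Passing to the quotient by the center, and discarding finitely many primes, we may assume each $\overline G(\mathbb F_{\mathfrak p})=G(\mathbb F_{\mathfrak p})/Z$ is a nonabelian finite simple group.

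We then set $S_i$ to be these simple quotients for a subsequence $\mathfrak p_1,\mathfrak p_2,\ldots$ of $\mathcal P$, chosen so that the residue field orders $|\mathbb F_{\mathfrak p_i}|$ are strictly increasing. This is possible because $\mathcal P$ is infinite and there are only finitely many primes of bounded norm. Since the Lie type of $G$ is fixed, the orders $|S_i|$ then grow strictly, so the $S_i$ are pairwise nonisomorphic.

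Next we check density. Fix a finite set $I\subset\mathbb N$. If the cited theorem already gives surjectivity onto $\prod_{i\in I}G(\mathbb F_{\mathfrak p_i})$, then projecting to the simple quotients gives surjectivity of $\rho_I$. If we only have surjectivity onto each factor, we use Goursat's lemma instead. Suppose the image $D$ of $\Lambda$ in $\prod_{i\in I}S_i$ surjects onto each factor, and the $S_i$ are pairwise nonisomorphic nonabelian simple groups. Then $D$ is the whole product. The argument is by induction on $|I|$. Write $I=I'\cup\{j\}$, so that $D\le D'\times S_j$ with $D'=\prod_{i\in I'}S_i$ by the inductive hypothesis. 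By Goursat, either $D$ is the full product or $S_j$ is isomorphic to a quotient of $D'$. Every quotient of $\prod_{i\in I'}S_i$ is itself a product of some of the $S_i$, $i\in I'$, so this would force $S_j\cong S_i$ for some $i\ne j$, a contradiction. Hence $\rho_I$ is surjective for every finite $I$. This is equivalent to density of $\rho(\Lambda)$ in the profinite product $\prod_{i=1}^\infty S_i$, since the kernels of the finite coordinate projections form a basis of neighbourhoods of the identity.

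The main obstacle is extracting the precise statement from \cite{LubotzkySegal03} over an arbitrary field, especially in positive characteristic. There the target field is a global function field, and $\Lambda$ must be chosen after passing to a finitely generated subring and a suitable specialization. The step needing most care is ensuring that the reduction maps are individually surjective onto $G(\mathbb F_{\mathfrak p})$, rather than merely onto a subgroup of bounded index. I would handle this by replacing $\Lambda$ with the finite-index subgroup provided in Theorem~21, which is exactly there to absorb such bounded-index defects. After that, the Goursat step above requires only surjectivity on each factor, so no simultaneous strong approximation is needed.
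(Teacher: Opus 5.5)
Your outline follows the paper's route: apply the Lubotzky alternative, reduce at the places outside a finite set, take nonabelian simple quotients of $G(\mathbb F_{\mathfrak p})$, extract pairwise nonisomorphic ones, and conclude density. For the ``arbitrary field'' issue you raise, the paper's concrete tool is the specialization theorem \cite[Proposition~8.1]{BCLM13}. It applies the alternative to the specialized image $\overline{\Gamma}$ and pulls the finite-index subgroup back to $\Gamma$. The paper gets density directly from $\widehat{\Lambda}\twoheadrightarrow\prod_{v\notin\Sigma}\mathbf G(\mathcal O_v)$ composed with reduction. Your Goursat induction is therefore unnecessary but correct; it is the case $a_i=1$ of \Cref{lem:disjoint-subdirect}.

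The step that fails as written is the choice of the $S_i$. Strictly increasing $|\mathbb F_{\mathfrak p_i}|$ does not give strictly increasing $|S_i|$, because the centre of $G(\mathbb F_{\mathfrak p})$ depends on $\mathfrak p$. For $G=\mathrm{SL}_3$ over $\mathbb Q$ (the case $\Gamma=\mathrm{SL}_3(\mathbb Z)$), $|\mathrm{PSL}_3(\mathbb F_{31})|<|\mathrm{PSL}_3(\mathbb F_{29})|$, since $3\mid 30$ but $3\nmid 28$. Even distinctness can fail: if residue fields $\mathbb F_4$ and $\mathbb F_5$ both occur (as over $\mathbb Q(\sqrt5)$), then $\mathrm{PSL}_2(\mathbb F_4)\cong\mathrm{PSL}_2(\mathbb F_5)$. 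The repair is what the paper does: since $|S_{\mathfrak p}|\to\infty$, choose the subsequence so that $|S_{\mathfrak p_i}|$ itself is strictly increasing.

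Relatedly, $G(\mathbb F_{\mathfrak p})/Z$ is simple for almost all $\mathfrak p$ only when $G$ is absolutely almost simple. In the characteristic-zero form of the alternative, $G$ is only $\mathbb Q$-simple and may be a restriction of scalars $R_{L/\mathbb Q}G'$, for instance when $\Gamma=\mathrm{SL}_2(\mathbb Z[\sqrt2])$. Then $G(\mathbb F_p)/Z\cong\prod_{w\mid p}G'(\mathbb F_w)/Z(G'(\mathbb F_w))$ has several simple factors. As in the paper, take $S_{\mathfrak p}$ to be a nonabelian simple quotient of $G(\mathbb F_{\mathfrak p})$. With these two fixes the rest of your argument, including the Goursat step, goes through unchanged.
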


	\begin{proof}
		Let $R$ be the finitely generated subring generated by the matrix entries of
		a finite generating set of $\Gamma$ and of the inverses of those generators,
		so that
		\[
		\Gamma\leq\operatorname{GL}_n(R).
		\]
		By the specialization theorem
		\cite[Proposition~8.1]{BCLM13}, there is a homomorphism from $R$ to a global
		field of the same characteristic such that the image of $\Gamma$ remains
		non-virtually-solvable.  Let
		\[
		\operatorname{sp}:\Gamma\twoheadrightarrow\overline{\Gamma}
		\]
		denote the resulting epimorphism onto the specialized image.
		
		Since $\overline{\Gamma}$ is not virtually soluble, the Lubotzky alternative
		\cite[Window~9, Theorems~12 and~21]{LubotzkySegal03} gives a finite-index
		subgroup
		\[
		\Lambda'\leq\overline{\Gamma},
		\]
		a global field $K$ (with $K=\mathbb Q$ in characteristic zero), a connected,
		simply connected simple algebraic group $\mathbf G$ over $K$, and a finite set
		of places $\Sigma$ such that there is a continuous epimorphism
		\[
		\widehat{\Lambda'}
		\twoheadrightarrow
		\mathbf G(\widehat{\mathcal O_\Sigma})
		=
		\prod_{v\notin\Sigma}\mathbf G(\mathcal O_v).
		\]
		
		Let
		\[
		\Lambda:=\operatorname{sp}^{-1}(\Lambda').
		\]
		Then $\Lambda$ has finite index in $\Gamma$, and the epimorphism
		$\Lambda\twoheadrightarrow\Lambda'$ induces a continuous epimorphism
		\[
		\widehat{\Lambda}\twoheadrightarrow\widehat{\Lambda'}.
		\]
		Hence
		\[
		\widehat{\Lambda}
		\twoheadrightarrow
		\prod_{v\notin\Sigma}\mathbf G(\mathcal O_v).
		\]
		
		After enlarging $\Sigma$ if necessary, reduction modulo $v$ is surjective
		for every $v\notin\Sigma$, and, apart from finitely many places $v$ with small
		residue field $\kappa(v)$, the group $\mathbf G(\kappa(v))$ has a nonabelian
		finite simple quotient $S_v$.
		Passing to an infinite subsequence of places, we may moreover arrange that
		the corresponding simple quotients have fixed Lie type and unbounded orders
		\cite[Window~2, Proposition~2]{LubotzkySegal03}.  Thus we may choose
		$v_1,v_2,\ldots$ so that
		\[
		|S_{v_1}|<|S_{v_2}|<\cdots.
		\]
		Put $S_i:=S_{v_i}.$ Then the groups $S_1,S_2,\ldots$ are pairwise nonisomorphic.
		
		Projecting to the factors indexed by $v_1,v_2,\ldots$ and then taking the
		corresponding finite simple quotients gives a continuous epimorphism
		\[
		\widehat{\Lambda}
		\twoheadrightarrow
		\prod_{i=1}^{\infty}S_i.
		\]
		Let
		\[
		\rho:\Lambda\longrightarrow\prod_{i=1}^{\infty}S_i
		\]
		be its restriction to $\Lambda$.  Since the canonical image of $\Lambda$ is
		dense in $\widehat{\Lambda}$, the image $\rho(\Lambda)$ is dense in
		$\prod_i S_i$.
		
		Finally, for every finite set $I\subset\mathbb N$, the projection of the dense
		subgroup $\rho(\Lambda)$ onto the finite discrete group
		$\prod_{i\in I}S_i$ is dense, and hence is the whole group.  Therefore
		\[
		\rho_I:\Lambda\twoheadrightarrow\prod_{i\in I}S_i
		\]
		is surjective.
	\end{proof}

	\section{Normal cores and semisimple blocks}
	
	We now prove the finite group-theoretic step that transfers the product
	quotients from a finite-index subgroup back to the ambient group.
	
	\begin{lemma}\label{lem:subdirect-simple}
		Let $S$ be a nonabelian finite simple group, and let
		$A\leq S^d$ project surjectively onto every coordinate.  Then
		\[
		A\cong S^a
		\]
		for some $1\leq a\leq d$.
	\end{lemma}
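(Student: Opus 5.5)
This is the classical Goursat-type lemma for subdirect products of a nonabelian simple group; I would prove it by induction on $d$. For $d=1$ there is nothing to prove, since $A=S$.

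For $d\geq 2$, write $S^d=S^{d-1}\times S$ and let $\pi:A\to S^{d-1}$ be the projection to the first $d-1$ coordinates. Its image $A'=\pi(A)$ projects onto every coordinate of $S^{d-1}$. By induction, $A'\cong S^{a'}$ for some $1\le a'\le d-1$. Let $K=\ker\pi$. Then $K=A\cap(1\times S)$, which we may view as a subgroup of the last factor $S$.

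The key step is to show that $K$ is normal in $S$. Since $K\trianglelefteq A$ and $A$ surjects onto the last coordinate, conjugation by elements of $A$ realizes conjugation by every element of $S$ on $1\times S$. Hence $K$ is normal in $S$, so by simplicity $K=1$ or $K=S$.

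We then split into two cases.

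If $K=S$, then $1\times S\le A$, and for $(x,s)\in A$ we also have $(x,1)\in A$. Therefore $A=A'\times S\cong S^{a'+1}$.

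If $K=1$, then $\pi$ is injective, so $A\cong A'\cong S^{a'}$.

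In either case $1\le a\le d$, which closes the induction. The final bound $a\le d$ also follows directly from $|A|\le|S|^d$.

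The only point needing care is the normality of $K$ in $S$. It uses nothing beyond the surjectivity of the last projection, so I expect no real obstacle. In particular, the nonabelian hypothesis is not needed for this recursion. It is only needed if one wants the sharper description of $A$ as a product of diagonals, which this statement does not require.
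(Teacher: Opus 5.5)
Your proof is correct and matches the paper's argument: induction on $d$, project onto the first $d-1$ coordinates, use surjectivity onto the last coordinate to see that the kernel is normal in $S$, and split into the cases where the kernel is trivial or equals $\{1\}^{d-1}\times S$. Your remark that the nonabelian hypothesis is never used is also accurate, since the paper's proof does not use it either.
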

	
	\begin{proof}
		We argue by induction on $d$.  The case $d=1$ is immediate.  Let
		$p:A\to S^{d-1}$ be projection onto the first $d-1$ coordinates.  Its image
		$A'$ is subdirect, so by induction $A'\cong S^a$ for some $a$.

		Let $K$ be the image of $\ker p$ under the last-coordinate projection. Since
		$\ker p\triangleleft A$ and the last-coordinate projection of $A$ is $S$, we have
		$K\triangleleft S.$	As $S$ is simple, $K=1$ or $K=S$. Since
		$\ker p\leq \{1\}^{d-1}\times S$, these two cases correspond to
		$\ker p=1$ and $\ker p=\{1\}^{d-1}\times S$, respectively.
		
		If $\ker p=\{1\}^{d-1}\times S$, then
		$A=A'\times S\cong S^{a+1}$. If $\ker p=1$, then
		$p:A\to A'$ is an isomorphism, and hence $A\cong A'\cong S^a$.

	\end{proof}
	
	\begin{lemma}
		\label{lem:disjoint-subdirect}
		Let $S_1,\ldots,S_m$ be pairwise nonisomorphic nonabelian finite simple groups,
		and let $a_i\geq1$.  If
		\[
		A\leq\prod_{i=1}^m S_i^{a_i}
		\]
		projects surjectively onto every factor $S_i^{a_i}$, then
		\[
		A=\prod_{i=1}^m S_i^{a_i}.
		\]
	\end{lemma}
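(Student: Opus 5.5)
I will argue by induction on $m$. The case $m=1$ is trivial, so assume $m\geq2$. Let $A\leq P:=\prod_{i=1}^m S_i^{a_i}$ project onto every factor $S_i^{a_i}$. Let $p:A\to\prod_{i<m}S_i^{a_i}$ be the projection onto the first $m-1$ blocks, and let $\pi_m:A\to S_m^{a_m}$ be the projection onto the last block. The image $p(A)$ still projects onto each of the first $m-1$ factors. So by the induction hypothesis $p(A)=\prod_{i<m}S_i^{a_i}$. It therefore suffices to show that $\ker p=\{1\}\times S_m^{a_m}$. Then $|A|=|p(A)|\cdot|\ker p|=|P|$, and hence $A=P$.

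Consider the subgroup $K=\pi_m(\ker p)$. Since $\ker p\triangleleft A$ and $\pi_m(A)=S_m^{a_m}$, $K$ is normal in $S_m^{a_m}$. The normal subgroups of a direct power of a nonabelian simple group are exactly the subproducts of coordinate factors. So $K=S_m^{J}$ for some $J\subseteq\{1,\ldots,a_m\}$. The projection $\pi_m$ then induces an epimorphism
\[
\prod_{i<m}S_i^{a_i}\cong A/\ker p\twoheadrightarrow S_m^{a_m}/K\cong S_m^{a_m-|J|}.
\]
This is the Goursat-type step: the graph of $A$ identifies a quotient of the first block with a quotient of the last block.

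The key step is to show that this forces $|J|=a_m$. Suppose $|J|<a_m$. Then $S_m$ would be a quotient of $\prod_{i<m}S_i^{a_i}$. Any quotient of that group is a direct product of some of its simple factors $S_i$, $i<m$, by the same normal-subgroup description. So $S_m\cong S_i$ for some $i<m$. This contradicts the assumption that the $S_i$ are pairwise nonisomorphic. Hence $K=S_m^{a_m}$.

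Since $\ker p\leq\{1\}\times S_m^{a_m}$ and $\pi_m$ is injective on $\{1\}\times S_m^{a_m}$, we get $\ker p=\{1\}\times S_m^{a_m}$, which finishes the induction. The main point requiring care is the description of normal subgroups of $\prod_j T_j$ with the $T_j$ nonabelian simple: each one is a product of some of the $T_j$. I will prove this directly. Let $N$ be normal and suppose $N$ has an element $n$ with nontrivial $j$-th coordinate. Then commutators $[n,t]$, with $t$ supported in the $j$-th factor, are nontrivial elements of $N\cap T_j$, because $T_j$ has trivial center. So $N\cap T_j$ is a nontrivial normal subgroup of $T_j$, hence equals $T_j$. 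It follows that $N$ is the product of those $T_j$ on which it is supported. Alternatively, I could invoke \Cref{lem:subdirect-simple} only for the structure of each block and use this normal-subgroup fact for the cross-block comparison.
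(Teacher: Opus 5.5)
Your proof is correct, but it takes a different route from the paper.

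\textbf{The paper's argument.} It is a one-line counting argument. Since $A$ surjects onto $S_i^{a_i}$, Jordan--H\"older shows that $S_i$ occurs at least $a_i$ times among the composition factors of $A$. Because the $S_i$ are pairwise nonisomorphic, these occurrences are all distinct. Hence $|A|\geq\prod_i|S_i|^{a_i}=|P|$, which forces $A=P$.

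\textbf{Your argument.} You run a Goursat-type induction on $m$. The key input is the description of normal subgroups, and hence quotients, of a direct product of nonabelian simple groups. You verify this correctly with the commutator trick, using that the factors have trivial center. Your step identifying $\ker p$ with $\{1\}\times S_m^{a_m}$ via injectivity of $\pi_m$ on the last block is also fine.

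\textbf{Comparison.} The paper's version is shorter and never uses that the $S_i$ are nonabelian. It works verbatim for any pairwise nonisomorphic simple groups, e.g.\ cyclic groups of distinct prime orders. Yours avoids Jordan--H\"older but needs the normal-subgroup lemma, which relies on trivial centers. Your crucial step, that $S_m$ is not a quotient of $\prod_{i<m}S_i^{a_i}$, is exactly what the composition-factor comparison gives at once. So the two arguments are closer than they look, and you could shortcut your key step that way.
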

	
	\begin{proof}
		For each $i$, the surjection $A\twoheadrightarrow S_i^{a_i}$ shows that
		$S_i$ occurs at least $a_i$ times among the composition factors of $A$.
		Since the groups $S_1,\ldots,S_m$ are pairwise nonisomorphic, it follows that
		\[
		|A|\geq\prod_{i=1}^m |S_i|^{a_i}.
		\]
		On the other hand, since $A\leq\prod_{i=1}^m S_i^{a_i}$, the reverse
		inequality holds. Hence
		\[
		|A|=\prod_{i=1}^m |S_i|^{a_i},
		\]
		and therefore $A=\prod_{i=1}^m S_i^{a_i}$.
	\end{proof}

	\begin{proposition}
		\label{prop:core-blocks}
		Let $\Gamma_0\triangleleft\Gamma$ be a finite-index normal subgroup.  Suppose
		that for some $m$ there is an epimorphism
		\[
		\rho=(\rho_1,\ldots,\rho_m):
		\Gamma_0\twoheadrightarrow S_1\times\cdots\times S_m,
		\]
		where the $S_i$ are pairwise nonisomorphic nonabelian finite simple groups.
		Then $\Gamma$ has a finite quotient $E_m$ fitting into an exact sequence
		\begin{equation}\label{eq:core-extension}
			1\longrightarrow M_1\times\cdots\times M_m
			\longrightarrow E_m
			\longrightarrow \Gamma/\Gamma_0
			\longrightarrow1.
		\end{equation}
		where, for each $i$,
		\[
		M_i\cong S_i^{a_i}
		\qquad\text{for some }a_i\geq1,
		\]
		and $M_i\triangleleft E_m$.
	\end{proposition}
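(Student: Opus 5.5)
Let $K_i=\ker\rho_i\triangleleft\Gamma_0$, so $\Gamma_0/K_i\cong S_i$. For $g\in\Gamma$, the conjugate $gK_ig^{-1}$ is again a normal subgroup of $\Gamma_0$ (because $\Gamma_0\triangleleft\Gamma$), and $\Gamma_0/gK_ig^{-1}\cong S_i$ via $x\mapsto\rho_i(g^{-1}xg)$. Since $\Gamma_0$ has finite index, only finitely many conjugates occur. Put
\[
C_i=\bigcap_{g\in\Gamma}gK_ig^{-1},\qquad C=\bigcap_{i=1}^m C_i,
\]
which are normal in $\Gamma$ and of finite index. We set $E_m=\Gamma/C$.

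\textbf{The blocks.} The image of $\Gamma_0$ in $E_m$ is $\Gamma_0/C$, and
\[
1\to\Gamma_0/C\to E_m\to\Gamma/\Gamma_0\to1
\]
is exact. Choose representatives $g_1,\dots,g_{d_i}$ of the distinct conjugates of $K_i$. Then $\Gamma_0/C_i$ embeds in $\prod_j\Gamma_0/g_jK_ig_j^{-1}\cong S_i^{d_i}$ and projects onto each coordinate. By \Cref{lem:subdirect-simple}, $\Gamma_0/C_i\cong S_i^{a_i}$ with $a_i\ge1$.

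Similarly, $\Gamma_0/C$ embeds in $\prod_i\Gamma_0/C_i\cong\prod_i S_i^{a_i}$ and projects onto each factor. Since the $S_i$ are pairwise nonisomorphic, \Cref{lem:disjoint-subdirect} gives
\[
\Gamma_0/C\cong\prod_{i=1}^m\Gamma_0/C_i.
\]

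\textbf{Defining $M_i$.} Let $D_i=\bigcap_{j\neq i}C_j$, which is normal in $\Gamma$, and set $M_i=D_i/C\le\Gamma_0/C$. Then $M_i$ is normal in $E_m$. Under the isomorphism above, $D_i/C$ corresponds to the elements whose coordinates are trivial outside $i$. Hence $M_i$ is exactly the $i$-th factor, $M_i\cong\Gamma_0/C_i\cong S_i^{a_i}$, and
\[
\Gamma_0/C=M_1\times\cdots\times M_m
\]
as an internal direct product.

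\textbf{Main obstacle.} The delicate point is this last identification: checking that the internal factors $D_i/C$ really form the direct product decomposition, rather than only that the abstract isomorphism holds. This follows because the isomorphism $\Gamma_0/C\to\prod_i\Gamma_0/C_i$ is the natural diagonal map. The preimage of the $i$-th coordinate factor is $\bigcap_{j\ne i}C_j/C$, and these coordinate factors commute, intersect trivially, and generate the product.
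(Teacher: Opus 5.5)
Your proof is correct and follows the paper's argument essentially step for step. The shared steps are: the cores $C_i$ of the kernels $\ker\rho_i$; \Cref{lem:subdirect-simple} to get $\Gamma_0/C_i\cong S_i^{a_i}$; \Cref{lem:disjoint-subdirect} to split $\Gamma_0/C$; and $M_i=\bigl(\bigcap_{j\neq i}C_j\bigr)/C$, identified with the $i$th coordinate factor via the natural diagonal map. There are two cosmetic differences: you index by distinct conjugates rather than by coset representatives, and for $m=1$ you should read the empty intersection $D_1$ as $\Gamma_0$, as the paper does explicitly.
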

	
	\begin{proof}
		Put $K_i=\ker\rho_i$.  Since $K_i\triangleleft\Gamma_0$ and
		$\Gamma_0\triangleleft\Gamma$, its $\Gamma$-core
		\[
		C_i=\core_\Gamma(K_i)
		=\bigcap_{\gamma\in\Gamma}\gamma K_i\gamma^{-1}
		\]
		is the intersection of only finitely many conjugates, one for each coset of
		$\Gamma_0$ in $\Gamma$.  Thus $C_i\triangleleft\Gamma$ and
		$[\Gamma_0:C_i]<\infty$.
		
		Choose coset representatives $\gamma_1,\ldots,\gamma_d$ for
		$\Gamma/\Gamma_0$.  The natural map
		\[
		\Gamma_0/C_i\longrightarrow
		\prod_{r=1}^d \Gamma_0/(\gamma_rK_i\gamma_r^{-1})
		\cong S_i^d
		\]
		is injective and projects surjectively onto every coordinate.  By
		\Cref{lem:subdirect-simple},
		\[
		A_i:=\Gamma_0/C_i\cong S_i^{a_i}
		\]
		for some $a_i\geq1$.
		
		Set
		\[
		N=\bigcap_{i=1}^mC_i,
		\qquad
		E_m=\Gamma/N,
		\qquad
		M=\Gamma_0/N.
		\]
		The group $E_m$ is finite and $M\triangleleft E_m$.  The natural map
		\[
		M\longrightarrow\prod_{i=1}^mA_i
		\]
		is injective and subdirect.  By \Cref{lem:disjoint-subdirect}, it is an
		isomorphism.  Thus
		\[
		M\cong A_1\times\cdots\times A_m.
		\]
		For each $i$, define
		\[
		M_i=
		\left(\bigcap_{j\neq i}C_j\right)\big/N
		\leq M,
		\]
		where, when $m=1$, the empty intersection is understood to be $\Gamma_0$.
		Under the preceding product isomorphism, $M_i$ is exactly the $i$th factor
		$A_i$.  Since every $C_j$ is normal in $\Gamma$, each $M_i$ is normal in
		$E_m$.  Finally,
		\[
		E_m/M\cong\Gamma/\Gamma_0,
		\]
		which gives \eqref{eq:core-extension}.
	\end{proof}
	
	\section{Proof of the main theorem}
	
	\begin{proof}[Proof of \Cref{thm:main}]
		Suppose first that $\Gamma$ is virtually solvable.  Then $\Gamma$ is amenable.
		An amenable discrete group with property $(T)$ is finite
		\cite[Theorem~1.1.6]{BHV08}.  Since $\Gamma$ is infinite, it does not have
		property $(T)$, so $\kappa(\Gamma,S)=0$ for every finite generating set $S$.
		Consequently $\ku(\Gamma)=0$.
		
		Assume now that $\Gamma$ is not virtually solvable.  By
		\Cref{prop:lubotzky-product}, there are a finite-index subgroup
		$\Lambda\leq\Gamma$, pairwise nonisomorphic nonabelian finite simple groups
		$S_1,S_2,\ldots$, and a dense homomorphism
		\[
		\rho:\Lambda\longrightarrow\prod_{i=1}^{\infty}S_i.
		\]
		Let
		\[
		\Gamma_0=\core_\Gamma(\Lambda)
		=\bigcap_{\gamma\in\Gamma}\gamma\Lambda\gamma^{-1}.
		\]
		Then $\Gamma_0\triangleleft\Gamma$ and $[\Gamma:\Gamma_0]<\infty$.
		
		Let $P=\prod_{i=1}^{\infty}S_i$ and let
		$U=\overline{\rho(\Gamma_0)}$.  Since $\Gamma_0$ has finite index in
		$\Lambda$ and $\rho(\Lambda)$ is dense in $P$, the closed subgroup $U$ has
		finite index in $P$, hence is open.  Therefore there is a finite set
		$I_0\subset\mathbb N$ such that
		\[
		\prod_{i\notin I_0}S_i\leq U.
		\]
		After discarding and relabelling finitely many factors, it follows that, for
		every $m\geq1$, the coordinate map is surjective:
     	\begin{equation}\label{eq:simultaneous-simple}
		   \rho_m:\Gamma_0\twoheadrightarrow S_1\times\cdots\times S_m.
    	\end{equation}
		
		Apply \Cref{prop:core-blocks} to \eqref{eq:simultaneous-simple}.  We obtain a
		finite quotient $E_m$ of $\Gamma$ and an exact sequence
		\[
		1\longrightarrow M_1\times\cdots\times M_m
		\longrightarrow E_m
		\longrightarrow\Gamma/\Gamma_0
		\longrightarrow1,
		\]
		where every $M_i\triangleleft E_m$ is a nontrivial direct power of $S_i$.
		By \Cref{cor:semisimple-kernel},
		\[
		\ku(E_m)\leq\frac2{\sqrt m}.
		\]
		Since $E_m$ is a quotient of $\Gamma$, \Cref{lem:quotient} gives
		\[
		\ku(\Gamma)
		\leq\ku(E_m)
		\leq\frac2{\sqrt m}
		\qquad(m\geq1).
		\]
		Letting $m\to\infty$ yields $\ku(\Gamma)=0$.
	\end{proof}
	

	     \section*{Acknowledgments}
	         
	    ChatGPT was used as an exploratory tool in developing the direct-product argument that appeared in version 1 of this preprint; this argument was subsequently generalized to \Cref{lem:finite-extension}. J. Yao thanks Goulnara Arzhantseva and Narutaka Ozawa for their comments and suggestions on an earlier version of this preprint. A.L. was supported by the European Research Council (ERC) under the European Union's Horizon 2020 (N. 882751).


\begin{thebibliography}{99}
		
		
		\bibitem{Arzhantseva}
		G.~N. Arzhantseva,
		\emph{The uniform Kazhdan property for $\operatorname{SL}_n(\mathbb Z)$,
			$n\geq3$},
		Enseign. Math. (2) \textbf{54} (2008), no.~1--2, 11--12.
		
		\bibitem{BCLM13}
		E.~Breuillard, Y.~de Cornulier, A.~Lubotzky, and C.~Meiri,
		\emph{On conjugacy growth of linear groups},
		Math. Proc. Cambridge Philos. Soc. \textbf{154} (2013), no.~2, 261--277.
		
		\bibitem{BHV08}
		B.~Bekka, P.~de la Harpe, and A.~Valette,
		\emph{Kazhdan's Property $(T)$},
		New Mathematical Monographs, vol.~11, Cambridge University Press,
		Cambridge, 2008.
		
		\bibitem{EMO}
		A.~Eskin, S.~Mozes, and H.~Oh,
		\emph{On uniform exponential growth for linear groups},
		Invent. Math. \textbf{160} (2005), no.~1, 1--30.
		
		
		\bibitem{GelanderZuk}
		T.~Gelander and A.~\.{Z}uk,
		\emph{Dependence of Kazhdan constants on generating subsets},
		Israel J. Math. \textbf{129} (2002), 93--98.
		
		
		\bibitem{KKN}
		M.~Kaluba, D.~Kielak, and P.~W. Nowak,
		\emph{On property $(T)$ for $\operatorname{Aut}(F_n)$ and
			$\operatorname{SL}_n(\mathbb Z)$},
		Ann. of Math. (2) \textbf{193} (2021), no.~2, 539--562.
		
		\bibitem{KNO}
		M.~Kaluba, P.~W. Nowak, and N.~Ozawa,
		\emph{$\operatorname{Aut}(F_5)$ has property $(T)$},
		Math. Ann. \textbf{375} (2019), no.~3--4, 1169--1191.
		
		
		\bibitem{Kassabov}
		M.~Kassabov,
		\emph{Kazhdan constants for $\operatorname{SL}_n(\mathbb Z)$},
		Internat. J. Algebra Comput. \textbf{15} (2005), no.~5--6, 971--995.
		
		\bibitem{Lubotzky}
		A.~Lubotzky,
		\emph{Discrete Groups, Expanding Graphs and Invariant Measures},
		Progress in Mathematics, vol.~125, Birkh\"auser, Basel, 1994.
		
		\bibitem{LubotzkySegal03}
		A.~Lubotzky and D.~Segal,
		\emph{Subgroup Growth},
		Progress in Mathematics, vol.~212, Birkh\"auser, Basel, 2003.
		
		\bibitem{LubotzkyWeiss}
		A.~Lubotzky and B.~Weiss,
		\emph{Groups and expanders},
		in \emph{Expanding Graphs} (Princeton, NJ, 1992),
		DIMACS Ser. Discrete Math. Theoret. Comput. Sci., vol.~10,
		Amer. Math. Soc., Providence, RI, 1993, 95--109.
		
	   \bibitem{Nitsche}
	   M.~Nitsche,
	   \emph{Computer proofs for Property $(T)$, and SDP duality},
	   preprint, arXiv:2009.05134.
		
	
		
		\bibitem{Osin}
		D.~V. Osin,
		\emph{Kazhdan constants of hyperbolic groups},
		Funct. Anal. Appl. \textbf{36} (2002), no.~4, 290--297.
		
		\bibitem{OsinSonkin}
		D.~Osin and D.~Sonkin,
		\emph{Uniform Kazhdan groups},
		preprint, 2006, arXiv:math/0606012.
		
		\bibitem{Pham}
		L.~L. Pham,
		\emph{Uniform Kazhdan constants and paradoxes of the affine plane},
		Transform. Groups \textbf{27} (2022), no.~1, 239--269.
		
		
		\bibitem{Riley}
		T.~R. Riley,
		\emph{Navigating in the Cayley graphs of
			$\operatorname{SL}_N(\mathbb Z)$ and $\operatorname{SL}_N(\mathbb F_p)$},
		Geom. Dedicata \textbf{113} (2005), 215--229.
		
		\bibitem{Shalom}
		Y.~Shalom,
		\emph{Explicit Kazhdan constants for representations of semisimple and
			arithmetic groups},
		Ann. Inst. Fourier (Grenoble) \textbf{50} (2000), no.~3, 833--863.
		
		
		
		
		

		
	\end{thebibliography}
\end{document}